\newcommand{\ra}{\rightarrow}
\newcommand{\ol}{\overline}
\newtheorem{theorem}{Theorem}[section]
\newtheorem{proposition}[theorem]{Proposition}
\newtheorem{lemma}[theorem]{Lemma}
\newtheorem{corollary}[theorem]{Corollary}
\newcommand{\gj}{\square}
\newcommand{\BQ}{\mbox{$\mathbb Q$}}
	\newcommand{\p}{\mbox{$\mathfrak p$}}
   \newcommand{\lnd}{\mbox{\rm LND}}
\begin{document} 

\begin{center}
{\Large \bf A note on rigidity and triangulability of a
        derivation}\\\vspace{.2in} {\large 
        Manoj K. Keshari and Swapnil A. Lokhande}\\
\vspace{.1in}
{\small
Department of Mathematics, IIT Bombay, Mumbai - 400076, India;\;
        (keshari,swapnil)@math.iitb.ac.in}
\end{center}

{\small
\noindent{\bf Abstract:} Let $A$ be a $\BQ$-domain, $K=frac(A)$,
$B=A^{[n]}$ and $D\in \lnd_A(B)$. Assume rank $D=$ rank $D_K=r$, where
$D_K$ is the extension of $D$ to $K^{[n]}$. Then we show that

$(i)$ If $D_K$ is rigid, then $D$ is rigid.

$(ii)$ Assume $n=3$, $r=2$ and $B=A[X,Y,Z]$ with $DX=0$. Then $D$ is
triangulable over $A$ if and only if $D$ is triangulable over
$A[X]$. In case $A$ is a field, this result is due to Daigle.

\vspace*{.1in}
\noindent {\bf Mathematics Subject Classification (2000):}
Primary 14L30, Secondary 13B25. 
\vspace*{.1in}

\noindent {\bf Key words:} Locally nilpotent derivation, rigidity,
triangulability.  }

\section{Introduction}
Throughout this paper, {\bf $k$ is a field and all rings are
$\BQ$-domains.}  We will begin by setting up some notations from \cite{7}. 
Let $B=A^{[n]}$ be an $A$-algebra, i.e.  $B$ is
$A$-isomorphic to the polynomial ring in $n$ variables over $A$.  A
{\it coordinate system} of $B$ over $A$ is an ordered $n$-tuple $(X_1,
X_2,..., X_n )$ of elements of $B$ such that $A[X_1, X_2,..., X_n]=B$.

An $A$-derivation $D:B\ra B$ is {\it locally nilpotent} if for each
$x\in B$, there exists an integer $s>0$ such that $D^s(x)=0$; $D$ is
{\it triangulable} over $A$ if there exists a coordinate system
$(X_1,\ldots,X_n)$ of $B$ over $A$ such that 
$D(X_i)\in A[X_1,\ldots,X_{i-1}]$ for $1\leq i\leq n$; {\it rank} of
$D$ is the least integer $r\geq 0$ for which there exists a coordinate
system $(X_1,\ldots,X_n)$ of $B$ over $A$ satisfying
$A[X_1,\ldots,X_{n-r}]\subset$ ker $D$; 
$\lnd_A(B)$ is the set of all locally
nilpotent $A$-derivations of $B$.

Let $\Gamma(B)$ be the set of coordinate systems of $B$ over
$A$. Given $D\in \lnd_A(B)$ of rank $r$, let $\Gamma_D(B)$ be the set
of $(X_1,\ldots,X_n)\in \Gamma(B)$ satisfying
$A[X_1,\ldots,X_{n-r}]\subset$ ker $D$; $D$ is {\it rigid} if
$A[X_1,..., X_{n-r}]=A[X'_1,..., X'_{n-r}]$ holds whenever $(X_1,...,
X_{n})$ and $(X'_1,..., X'_{n})$ belong to $\Gamma_D(B)$. 

For an example, if $D\in \lnd_A(B)$ has rank $1$, then $D$ is
rigid. In this case $ker(D)=A[X_1,\ldots,X_{n-1}]$ for some coordinate
system $(X_1,\ldots,X_n)$ and $D=f\partial_{X_n}$ for some $f\in
ker(D)$. If rank $D=n$, then $D$ is obviously rigid, as no variable is
in $ker(D)$. If rank $D\not= 1,n$, then $ker(D)$ is not generated by
$n-1$ elements of a coordinate system and is generally difficult to
see whether $D$ is rigid.  For an example of a non-rigid triangular
derivation on $k^{[4]}$, see section 3.  We remark that there is also
a notion of a ring to be rigid. We say that a ring $A$ is rigid if
$\lnd(A)=\{0\}$, i.e. there is no non-zero locally nilpotent
derivation on $A$. Clearly polynomial rings $k^{[n]}$ are non-rigid
rings for $n\geq 1$.

We will state the following result of Daigle (\cite{7}, Theorem 2.5)
which is used later.

\begin{theorem}\label{daigle}
All locally nilpotent derivations of $k^{[3]}$ are rigid.
\end{theorem}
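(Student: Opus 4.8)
The plan is to reduce to the only nontrivial case, rank $r=2$, and then to show that the coordinate lying in $\ker D$ is unique up to an affine transformation. Write $B=k^{[3]}$ and let $D\in\lnd_k(B)$ have rank $r$. The cases $r=0$ and $r=3$ are immediate ($D=0$ in the first, while in the second no variable lies in $\ker D$, so $\Gamma_D(B)$ imposes no condition and rigidity is vacuous), and $r=1$ is covered by the rank-one remark in the introduction. So assume $r=2$. Then $\Gamma_D(B)$ consists of the coordinate systems $(X_1,X_2,X_3)$ with $X_1\in\ker D$, and rigidity is exactly the assertion that whenever $X,X'$ are coordinates of $B$ with $X,X'\in\ker D$ one has $k[X]=k[X']$; since both subrings are isomorphic to $k^{[1]}$, this is the same as $X'=aX+b$ with $a\in k^*$, $b\in k$.

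First I would pin down the kernel. Because $X\in\ker D$ is a variable, $B=k[X]^{[2]}$ and $D$ is a $k[X]$-derivation; putting $R=k[X]\cong k^{[1]}$, we have $D\in\lnd_R(R^{[2]})$. By the Bhatwadekar--Dutta description of kernels of $R$-derivations of $R^{[2]}$ over a PID $R\supseteq\BQ$ (compatibly with Miyanishi's theorem that $\ker D\cong k^{[2]}$), the kernel has the form $\ker D=R[F]=k[X,F]$ for some $F\in B$. Passing to the generic fibre $K=k(X)$, Rentschler's theorem shows $F$ is a variable of $K[Y,Z]=K^{[2]}$ over $K$; in particular $\pi=(X,F)\colon\BA^3\to\BA^2$ is dominant with general fibre $\BA^1$.

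The remaining step, which is the crux, is to show that the second kernel-coordinate $X'=P(X,F)\in k[X,F]$ cannot genuinely involve $F$, i.e. $\deg_F P=0$. The approach is to use that $X'$, being a coordinate of $B$, has every level set $\{X'=\lambda\}\cong\BA^2$, whereas $\{X'=\lambda\}=\pi^{-1}(\{P=\lambda\})$ is an $\BA^1$-fibration that degenerates over the finitely many values of $X$ at which $F$ ceases to be a fibre-variable. Already the case $P=F$ shows the subtlety: $F$ is a variable of the generic fibre $K[Y,Z]$ yet typically not a coordinate of $B$ (for instance $F=Y^2-XZ$ cuts out a smooth affine quadric, not a plane), and the obstruction lives precisely in these special fibres. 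Making this rigorous — that any $F$-dependence forces some level set of $X'$ to fail to be a polynomial plane — is the genuinely three-dimensional input, and I expect it to be the main obstacle; a purely algebraic substitute would run a degree estimate on the $R$-filtration of $B$ by $\deg_{(Y,Z)}$ together with the factorial closedness of $\ker D$ in $B$. Once $\deg_F P=0$ is known, $X'=P(X)\in k[X]$ is a variable of $B$ contained in $k[X]$ with $X$ a variable, so $P$ is linear and $X'=aX+b$, whence $k[X]=k[X']$.
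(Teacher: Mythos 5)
First, a point of reference: the paper does not prove this statement at all. It is Daigle's theorem, imported from (\cite{7}, Theorem 2.5) and used as a black box, so your attempt can only be measured against Daigle's original argument, not against anything in this paper. Your preliminary reductions are correct and use exactly the tools the paper later quotes: ranks $0$, $1$, $3$ are trivial or follow from the rank-one remark in the introduction; in rank $2$, writing $B=k[X]^{[2]}$ and applying the HCF-kernel theorem (\ref{l3}) with $R=k[X]$ gives $\ker D=k[X,F]$; and the endgame --- a variable of $B$ lying in $k[X]$ must be linear in $X$, by passing to $\bar k$ and factoring $P(X)-\lambda$ --- is easy and standard.

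The problem is the step you yourself flag as ``the main obstacle'': showing that a variable $X'=P(X,F)$ of $B$ lying in $\ker D=k[X,F]$ satisfies $\deg_F P=0$. That single step \emph{is} Daigle's theorem; everything you actually establish is routine bookkeeping around it. What you offer in its place is a plausible geometric strategy --- genuine $F$-dependence should force some level surface $\{X'=\lambda\}$, viewed as an $\BA^1$-fibration over the curve $\{P=\lambda\}\subset\Spec k[X,F]$, to fail to be $\BA^2$ --- but no part of this is proved: you would need (i) control of the special fibers of $\Spec B\to\Spec(\ker D)$ (these are nonempty precisely because rank $D=2$, but their structure is not given by Rentschler, Abhyankar--Eakin--Heinzer, or Bhatwadekar--Dutta), and (ii) a proof that the presence of degenerate fibers over $\{P=\lambda\}$ obstructs $\{X'=\lambda\}\cong\BA^2$, which is a nontrivial statement about $\BA^1$-fibered surfaces. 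The ``purely algebraic substitute'' (a degree estimate plus factorial closedness) is likewise only named, never carried out. There is also a small factual slip: $\{Y^2-XZ=0\}$ is not a smooth affine quadric --- it is singular at the origin --- though the conclusion you draw from it, that $Y^2-XZ$ is not a coordinate of $B$, is correct. In sum, the attempt is a correct framing with a genuine gap located exactly where the content of the theorem lives; as written it proves the theorem only in the trivial cases.
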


Our first result extends this as follows:

\begin{theorem}\label{Daigle1}
Let $A$ be a ring, $B=A^{[n]}$, $K=frac(A)$ and $D\in
\lnd_A(B)$. Assume that rank $D=$ rank $D_K$, where $D_K$ is the
extension of $D$ to $K^{[n]}$. If $D_K$ is rigid, then $D$ is rigid.
\end{theorem}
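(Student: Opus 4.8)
The plan is to reduce the rigidity of $D$ to that of $D_K$ by base change to $K$, and then to descend the resulting equality of kernels from $K$ back down to $A$. Write $r=$ rank $D=$ rank $D_K$ and set $B_K=B\ot_A K\cong K^{[n]}$. First I would note that any coordinate system $(X_1,\ldots,X_n)\in\Gamma(B)$ remains a coordinate system of $B_K$ over $K$, since $A[X_1,\ldots,X_n]=B$ gives $K[X_1,\ldots,X_n]=B_K$ and the $X_i$ stay algebraically independent over $K$. Next, because $K$ is a localization of $A$ and hence flat, tensoring the exact sequence $0\ra \ker D\ra B\by{D} B$ over $A$ with $K$ yields $\ker D_K=(\ker D)\ot_A K$; that is, the kernel of the extended derivation is exactly the $K$-subalgebra of $B_K$ generated by $\ker D$.

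Combining these two observations with the rank hypothesis is the conceptual crux. If $(X_1,\ldots,X_n)\in\Gamma_D(B)$, then $A[X_1,\ldots,X_{n-r}]\subset\ker D$, so after base change $K[X_1,\ldots,X_{n-r}]\subset\ker D_K$. Since rank $D_K=r$ as well, this says precisely that $(X_1,\ldots,X_n)\in\Gamma_{D_K}(B_K)$, giving $\Gamma_D(B)\subseteq\Gamma_{D_K}(B_K)$. Here the assumption rank $D=$ rank $D_K$ is essential: one always has rank $D\geq$ rank $D_K$, but if the rank dropped under extension, the $n-r$ kernel-variables inherited from $A$ would be too few to witness membership in $\Gamma_{D_K}(B_K)$, and the reduction would break.

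Finally I would invoke the rigidity of $D_K$. Given two systems $(X_1,\ldots,X_n),(X'_1,\ldots,X'_n)\in\Gamma_D(B)$, both lie in $\Gamma_{D_K}(B_K)$, so rigidity of $D_K$ forces
$$K[X_1,\ldots,X_{n-r}]=K[X'_1,\ldots,X'_{n-r}]$$
as subrings of $B_K$. To conclude I would descend this equality by intersecting with $B$. Writing $R=A[X_1,\ldots,X_{n-r}]$, I claim $R=K[X_1,\ldots,X_{n-r}]\cap B$: an element of $B$ lying in $K[X_1,\ldots,X_{n-r}]$ is a polynomial in $X_1,\ldots,X_{n-r}$ alone, and since it belongs to $B=A[X_1,\ldots,X_n]$ its coefficients lie in $A$, so it lies in $R$. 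The same holds for $R'=A[X'_1,\ldots,X'_{n-r}]$, whence
$$R=K[X_1,\ldots,X_{n-r}]\cap B=K[X'_1,\ldots,X'_{n-r}]\cap B=R',$$
which is exactly the rigidity of $D$. The only step demanding genuine care is this last descent, where the uniqueness of the polynomial representation with respect to a coordinate system of $B$ over $A$ is what pins the coefficients down to $A$ rather than merely to $K$.
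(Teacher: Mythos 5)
Your proof is correct, and its skeleton matches the paper's: extend to $K$, use the rank hypothesis to see that both coordinate systems lie in $\Gamma_{D_K}(K^{[n]})$, invoke rigidity of $D_K$ to get $K[X_1,\ldots,X_{n-r}]=K[X'_1,\ldots,X'_{n-r}]$, and then descend this equality to $A$. Where you genuinely diverge is in the descent, which is the only nontrivial step. The paper clears denominators (for $f\in A[X_1,\ldots,X_{n-r}]$ it picks $a\in A$ with $af\in A[X'_1,\ldots,X'_{n-r}]$, so $fB\cap A[X'_1,\ldots,X'_{n-r}]\neq 0$) and then cites the Abhyankar--Eakin--Heinzer result (Lemma \ref{l10}), applied to the two polynomial structures $A[X_1,\ldots,X_{n-r}]^{[r]}=B=A[X'_1,\ldots,X'_{n-r}]^{[r]}$, to conclude $f\in A[X'_1,\ldots,X'_{n-r}]$. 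You instead prove directly that $A[X_1,\ldots,X_{n-r}]=K[X_1,\ldots,X_{n-r}]\cap B$ (and likewise for the primed system) by comparing coefficients: an element of $B$ has a unique representation as a polynomial in $X_1,\ldots,X_n$ over $K$, and membership in $B$ forces those coefficients into $A$. Intersecting the equality over $K$ with $B$ then finishes. Your argument is valid and more self-contained: it replaces the AEH lemma --- a statement about two a priori unrelated polynomial structures on the same ring --- with an elementary uniqueness-of-coefficients argument, which works precisely because each intersection $K[X_1,\ldots,X_{n-r}]\cap B$ is computed in the very coordinate system that defines it; the paper's citation buys brevity at the cost of importing external machinery. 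One minor remark: your flatness computation of $\ker D_K$ is unnecessary for the proof --- the only inclusion you use, $K[X_1,\ldots,X_{n-r}]\subseteq \ker D_K$, is immediate since $D_K$ is a $K$-derivation killing each $X_i$ for $i\leq n-r$.
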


In (\cite{7}, Corollary 3.4), Daigle obtained the following
triangulability criteria: Let $D$ be an irreducible, locally nilpotent
derivation of $R=k^{[3]}$ of rank at most 2. Let $(X, Y, Z)\in
\Gamma(R)$ be such that $DX=0$. Then $D$ is triangulable over $k$ if
and only if $D$ is triangulable over $k[X]$. Our second result extends
this result as follows:

\begin{theorem}\label{Daigle2}
Let $A$ be a ring, $B=A^{[3]}$, $K=\mbox{frac}(A)$ and
 $D\in \lnd_A(B)$. Let $(X, Y, Z)\in \Gamma(B)$ be such that
 $DX=0$. Assume that rank $D=$ rank $D_K=2$.  Then $D$ is triangulable
 over $A$ if and only if $D$ is triangulable over $A[X]$.
\end{theorem}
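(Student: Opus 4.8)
The plan is to treat the two implications separately, concentrating essentially all of the content in the direction ``triangulable over $A$ $\Rightarrow$ triangulable over $A[X]$''. The reverse direction is a formality and uses neither the rank hypothesis nor rigidity: since $DX=0$ and $(X,Y,Z)$ is a coordinate system, $B=(A[X])^{[2]}$, so if $(U,V)$ is a coordinate system of $B$ over $A[X]$ with $DU\in A[X]$ and $DV\in A[X][U]$, then $(X,U,V)$ is a coordinate system of $B$ over $A$ realizing $DX=0\in A$, $DU\in A[X]$ and $DV\in A[X,U]$, so $D$ is triangulable over $A$.

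For the forward direction the key input is rigidity of $D$, which I would obtain for free from the results already in hand. Since $K$ is a field, every locally nilpotent derivation of $K^{[3]}$ is rigid by Theorem \ref{daigle}; in particular $D_K$ is rigid, and then the hypothesis rank $D=$ rank $D_K$ lets Theorem \ref{Daigle1} conclude that $D$ itself is rigid. Because $D$ is an $A$-derivation with $DX=0$, we have $A[X]\subseteq \ker D$, so $(X,Y,Z)\in\Gamma_D(B)$; thus (as $n-r=1$ here) $A[X]$ is precisely the distinguished kernel subring that rigidity makes independent of the chosen element of $\Gamma_D(B)$.

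Now I would start from a coordinate system $(T_1,T_2,T_3)$ triangulating $D$ over $A$, so $DT_1\in A$, $DT_2\in A[T_1]$ and $DT_3\in A[T_1,T_2]$. The step I expect to be the main obstacle is showing that necessarily $DT_1=0$; I would argue by contradiction. If $DT_1=a\in A\setminus\{0\}$, then $a$ is a unit in $K$, so $s:=T_1/a$ is a slice of $D_K$. By the Slice Theorem $K^{[3]}=(\ker D_K)[s]$, and since the kernel of a nonzero locally nilpotent derivation of $K^{[3]}$ is a polynomial ring $K^{[2]}$, a system of generators of $\ker D_K$ together with $s$ is a coordinate system exhibiting rank $D_K\leq 1$, contradicting rank $D_K=2$. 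Once $DT_1=0$ is established, $A[T_1]\subseteq\ker D$ gives $(T_1,T_2,T_3)\in\Gamma_D(B)$, so rigidity forces $A[T_1]=A[X]$. Consequently $(T_2,T_3)$ is a coordinate system of $B$ over $A[X]=A[T_1]$ with $DT_2\in A[T_1]=A[X]$ and $DT_3\in A[T_1,T_2]=A[X][T_2]$, which is exactly a triangulation of $D$ over $A[X]$, completing the plan.
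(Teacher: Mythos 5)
Your proposal is correct and follows essentially the same route as the paper's own proof: reduce to the forward direction, show the first variable $T_1$ of a triangulating system must satisfy $DT_1=0$ by producing a slice $T_1/a$ of $D_K$ and contradicting rank $D_K=2$, then invoke rigidity of $D$ (via Theorems \ref{daigle} and \ref{Daigle1}) to get $A[T_1]=A[X]$ and read off the triangulation over $A[X]$. The only difference is cosmetic: where the paper asserts tersely that a slice forces rank $D_K=1$, you spell out the underlying Slice Theorem and the fact that $\ker D_K=K^{[2]}$.
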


%%%%%%%%%%%%%%%%%%%%%%%%%%%%%%%%%%%%%5%$$$$$$$$$$$$$$$$$$$$$$$$$$$$$$$$$
\section{Preliminaries}

Recall that a ring is called a {\it HCF}-ring if intersection of two
 principal ideal is again a principal ideal. We state some results for
 later use.

\begin{lemma}(Daigle \cite{7}, 1.2){\label{l1}}
Let $D$ be a $k$-derivation of $R=k^{[n]}$ of rank 1 and let $(X_1,
X_2,..., X_n)\in \Gamma(R)$ be such that $k[X_1,
X_2,..., X_{n-1}]\subset$ ker $D$. Then

$(i)$ ker $D=k[X_1, X_2,..., X_{n-1}];$

$(ii)$ $D$ is locally nilpotent if and only if $D(X_n)\in$ ker $D.$
\end{lemma}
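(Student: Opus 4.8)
The plan is to first reduce $D$ to a normal form. Since $D$ is a $k$-derivation of $R=k[X_1,\dots,X_n]$ and a $k$-derivation is determined by its values on a coordinate system via the chain rule $D(g)=\sum_i \partial_{X_i}(g)\,D(X_i)$, the hypothesis $k[X_1,\dots,X_{n-1}]\subset\ker D$ (i.e. $D(X_i)=0$ for $1\le i\le n-1$) forces $D=f\,\partial_{X_n}$, where $f:=D(X_n)\in R$ and $\partial_{X_n}$ denotes partial differentiation in $X_n$. Because $D$ has rank $1$ and not $0$, we have $D\ne 0$, hence $f\ne 0$; this is the only place the rank hypothesis enters.

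For $(i)$, I would use the normal form directly. For any $g\in R$ we have $D(g)=f\,\partial_{X_n}(g)$, so, as $R$ is a domain and $f\ne 0$, $D(g)=0$ if and only if $\partial_{X_n}(g)=0$. Writing $R=k[X_1,\dots,X_{n-1}][X_n]$ and using that $k$ is a $\BQ$-domain (characteristic $0$), a polynomial with vanishing $X_n$-derivative must be constant in $X_n$, so $\ker\partial_{X_n}=k[X_1,\dots,X_{n-1}]$. Combining these gives $\ker D=k[X_1,\dots,X_{n-1}]$, which is $(i)$.

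For $(ii)$, suppose first that $f=D(X_n)\in\ker D$; by $(i)$ this means $f\in k[X_1,\dots,X_{n-1}]$, so $\partial_{X_n}f=0$ and $D(f)=0$. Then $D$ commutes with multiplication by $f$, and a straightforward induction yields $D^j(g)=f^j\,\partial_{X_n}^j(g)$ for all $g\in R$ and $j\ge 0$; since every $g$ has finite $X_n$-degree, some $\partial_{X_n}^j(g)$ vanishes, so $D^j(g)=0$ and $D$ is locally nilpotent. For the converse I would argue by contraposition: assume $f\notin\ker D$, i.e. (using $(i)$) $d:=\deg_{X_n}f\ge 1$, and track the $X_n$-degree along the orbit of $X_n$. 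If $g\in R$ has $e:=\deg_{X_n}g\ge 1$ with $X_n$-leading coefficient $\ell(g)$, then $D(g)=f\,\partial_{X_n}(g)$ has leading term $e\,\ell(g)\,\ell(f)\,X_n^{\,e+d-1}$; since $k$ has characteristic $0$ the integer $e$ is invertible in $k$, so $e\,\ell(g)\,\ell(f)$ is a product of nonzero elements of the domain $R$ and is itself nonzero. Starting from $D(X_n)=f\ne 0$ and iterating, an induction shows the $X_n$-leading coefficient of $D^j(X_n)$ is nonzero for every $j$, whence $D^j(X_n)\ne 0$ for all $j$ and $D$ is not locally nilpotent. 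This proves that $D$ locally nilpotent implies $f\in\ker D$, completing $(ii)$.

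The genuine content is concentrated in this converse direction: part $(i)$ and the easy implication of $(ii)$ fall out immediately from the normal form $D=f\,\partial_{X_n}$, whereas the hard direction requires the characteristic-$0$ leading-coefficient bookkeeping to guarantee that no cancellation occurs under iteration. That is exactly the step where the $\BQ$-domain hypothesis is indispensable, since in positive characteristic the factor $e$ could vanish and $D=f\,\partial_{X_n}$ could be locally nilpotent even with $\partial_{X_n}f\ne 0$.
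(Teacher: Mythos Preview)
Your proof is correct. Note, however, that the paper does not actually give its own proof of this lemma: it is stated as a quoted result from Daigle \cite{7}, 1.2, and used as a black box. So there is no ``paper's proof'' to compare against. Your argument---reducing to the normal form $D=f\,\partial_{X_n}$, reading off $\ker D$ directly, and handling the nontrivial direction of (ii) by tracking the $X_n$-degree and leading coefficient under iteration---is a clean and standard way to establish the result, and your remark that characteristic zero is exactly what prevents the leading coefficient $e\,\ell(g)\,\ell(f)$ from vanishing is on point.
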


\begin{proposition}(Abhyankar, Eakin and Heinzer \cite{9}, Proposition 4.8){\label{l2}}
Let $R$ be a HCF-ring, $A$ a ring of transcendence degree one over $R$
and $R\subset A\subset R^{[n]}$ for some $n\geq 1.$ If $A$ is a
factorially closed subring of $R^{[n]},$ then $A=R^{[1]}.$
\end{proposition}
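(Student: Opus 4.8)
The plan is to reformulate factorial closure so as to view $A$ as the intersection of a field with $B$, reduce to the case where the base is a field by inverting $R\setminus\{0\}$, settle that case with L\"uroth's theorem, and finally descend to $R$ using the GCD (i.e.\ HCF) structure. Throughout I write $B=R^{[n]}=R[x_1,\dots,x_n]$, $K=\operatorname{frac}(R)$, $L=\operatorname{frac}(A)$, $F=\operatorname{frac}(B)=K(x_1,\dots,x_n)$, and $S=R\setminus\{0\}$.

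First I would record the consequences of factorial closure. If $u\in L\cap B$, then writing $u=p/q$ with $p,q\in A$ gives $qu=p\in A$ with $q,u\in B$, so factorial closure forces $u\in A$; hence $A=L\cap B$, and the identical argument gives $S^{-1}A\cap B=A$. Taking $u\in L$ integral over $A$ (hence lying in $B$, since $B$ is integrally closed) shows $A$ is integrally closed in $L$. Since $B^{*}=R^{*}$ and a unit of $B$ lying in $A$ has its inverse pulled into $A$, one gets $A^{*}=R^{*}$. Finally, because $R$ is an HCF-ring, $B=R^{[n]}$ is again a GCD domain, so every $f\in B$ has a content $\operatorname{cont}(f)\in R$ (well defined up to units) and Gauss's lemma $\operatorname{cont}(fg)=\operatorname{cont}(f)\operatorname{cont}(g)$ holds; this is the engine of the descent.

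Next I would reduce to a field. Since $R\subseteq A$ and $\operatorname{trdeg}_R A=1$, we have $K\subseteq L\subseteq F$ with $\operatorname{trdeg}_K L=1$. As $K$ is algebraically closed in the rational function field $F$, it is algebraically closed in $L$, so L\"uroth's theorem gives $L=K(\theta)$ for a single $\theta$. Localizing, $S^{-1}A$ is a factorially closed, integrally closed $K$-subalgebra of $K^{[n]}$ with fraction field $L=K(\theta)$ and unit group $K^{*}$; being the coordinate ring of a normal rational affine curve over $K$ with only constant units, it must be the affine line, i.e.\ $S^{-1}A=K[\tau]=K^{[1]}$ for some $\tau$ of total degree $d:=\deg_x\tau\geq 1$. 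Thus over $K$ the result already holds, and the entire remaining problem is to descend this polynomial structure from $K^{[n]}$ down to $R^{[n]}$. For the descent I observe that in $K[\tau]$ every element has total degree a multiple of $d$, and the degree-$d$ elements are exactly $\alpha\tau+\beta$ with $\alpha\in K^{*},\beta\in K$; hence every degree-$d$ element of $A$ has leading form (top-degree part in $B$) a $K$-multiple of $\phi:=\operatorname{in}(\tau)$, which after scaling I may take primitive in $B$. Because $\phi$ is primitive, the elementary GCD principle ``$c\phi\in B$ forces $c\in R$'' shows that the leading coefficients of degree-$d$ elements of $A$ form an ideal $I\subseteq R$. \emph{Granting $I=R$}, I would choose $t\in A$ of degree $d$ with $\operatorname{in}(t)=\phi$; then $R[t]\subseteq A$, and for any $a\in A$ of degree $md$ I write $a=\sum_{i}\beta_i t^{i}$ in $S^{-1}A=K[t]$, note that $\phi^{m}$ is primitive by Gauss, conclude $\beta_m\in R$ by the same GCD principle applied to $\operatorname{in}(a)=\beta_m\phi^{m}\in B$, and subtract $\beta_m t^{m}\in R[t]$ to induct on $\deg a$. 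This gives $A=R[t]=R^{[1]}$.

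The main obstacle is precisely the step I flagged, namely proving $I=R$: that some minimal-degree element of $A$ has a \emph{primitive} (equivalently unit) leading form. This is exactly where the HCF hypothesis is indispensable, and it cannot be bypassed by merely clearing denominators: with $R=\BZ$ the primitive element $2x+1$ generates $S^{-1}A$ over $\BQ$ yet fails to generate $A=\BZ[x]$ over $\BZ$, whereas $x$ succeeds, so the issue is genuinely the leading coefficient and not the content alone. I would attack $I=R$ by combining Gauss's lemma with factorial closure: for degree-$d$ elements $f,g$ of $A$ with leading coefficients $a,b\in I$, the combination $ag-bf$ has degree below $d$ and hence lies in $R$ by minimality, which forces multiplicative relations on $I$; and by localizing $R$ at each of its primes (every such localization is again a local GCD domain) I would reduce to a valuative statement in which the factorial closure of $A$ in $B$ prevents the leading coefficient from being trapped in a proper ideal. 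Establishing $I=R$ in this way is where I expect essentially all the work to concentrate.
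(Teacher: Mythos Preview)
The paper does not prove this proposition. It appears in the Preliminaries section as a quoted result from Abhyankar, Eakin and Heinzer \cite{9}, Proposition~4.8, and is used without proof, as a black box, in the argument for Proposition~\ref{l16}. There is therefore no proof in the present paper against which to compare your attempt.

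As for the attempt on its own merits: the overall plan---pass to $K=\operatorname{frac}(R)$, invoke L\"uroth to obtain $S^{-1}A=K[\tau]$, and then descend to $R$ using contents and Gauss's lemma---is a sensible architecture and is in the spirit of the original Abhyankar--Eakin--Heinzer argument. However, by your own admission the heart of the matter, the claim $I=R$ that a minimal-degree element of $A$ can be chosen with primitive leading form, is not established: the relation $ag-bf\in R$ you extract constrains the leading coefficients but does not by itself force the ideal they generate to be all of $R$, and the closing ``valuative'' reduction is only gestured at. A smaller but genuine issue occurs earlier: the assertion that a normal rational affine curve over $K$ with unit group $K^{*}$ must be $\mathbb{A}^{1}_{K}$ is false when $K$ is not algebraically closed (for instance the real conic $u^{2}+v^{2}=v$ over $\mathbb{R}$ is smooth, rational, has unit group $\mathbb{R}^{*}$, yet is not $\mathbb{A}^{1}_{\mathbb{R}}$). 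You need to use the factorial closure of $S^{-1}A$ in $K^{[n]}$---which forces $S^{-1}A$ to be a UFD, hence to have trivial Picard group---to exclude such curves, and this step should be made explicit. As written, then, your proposal is an outline with a real gap at the flagged step rather than a complete proof.
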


\begin{lemma}(Abhyankar, Eakin and Heinzer \cite{9}, 1.7){\label{l10}}
Suppose $A^{[n]}=R=B^{[n]}$. If $b\in B$ is such that $bR \cap A \neq
0$, then $b\in A.$
\end{lemma}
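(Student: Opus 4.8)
The plan is to reduce the statement to the standard fact that the coefficient ring of a polynomial ring over a domain is \emph{factorially closed} (inert) in that polynomial ring. Recall that a subring $S\subseteq T$ is factorially closed if $0\neq xy\in S$ with $x,y\in T$ forces both $x\in S$ and $y\in S$. I claim that $A$ is factorially closed in $R=A^{[n]}$. To see this, fix a coordinate system $(T_1,\ldots,T_n)$ with $R=A[T_1,\ldots,T_n]$ and use the total-degree function on $R$. Since $A$ is a domain (all rings here are $\BQ$-domains), the leading form of a product is the product of the leading forms, so total degree is additive: $\deg(xy)=\deg(x)+\deg(y)$ for nonzero $x,y\in R$. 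Consequently, if $xy\in A$ is nonzero, then $\deg(xy)=0$ forces $\deg(x)=\deg(y)=0$, i.e.\ $x,y\in A$.

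Granting this, the lemma follows immediately. Because $R=B^{[n]}$, the ring $B$ is identified with a subring of $R$, so $b\in B\subseteq R$. The hypothesis $bR\cap A\neq 0$ supplies some $r\in R$ with $a:=br\in A$ and $a\neq 0$. Now $a=br$ is a nonzero element of $A$ written as a product of two elements of $R$, so applying factorial closedness of $A$ in $R$ gives $b\in A$ (and, incidentally, $r\in A$ as well), which is exactly the assertion. Note that $b\neq 0$ is automatic, since $bR\cap A\neq 0$ already forbids $b=0$.

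The only point requiring care is the first step, and even there the work is light: one must check that the total-degree argument is valid, which relies solely on $A$ being a domain so that the product of leading coefficients does not vanish. Thus there is no real obstacle; the content of the lemma is carried entirely by the factorial closedness of the coefficient ring, and the twin polynomial structures $A^{[n]}=R=B^{[n]}$ enter only to guarantee that $b$ lives inside $R$, so that the product $br$ makes sense and the factorial-closedness criterion can be applied.
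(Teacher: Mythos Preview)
Your argument is correct. The paper does not supply its own proof of this lemma; it merely cites Abhyankar--Eakin--Heinzer \cite{9}, 1.7, as a preliminary fact. The content of that result is precisely what you have identified: the coefficient ring $A$ is factorially closed (what AEH call \emph{inert}) in $R=A^{[n]}$, and this alone forces $b\in A$ once one knows $b$ has a nonzero $R$-multiple lying in $A$. Your degree argument for factorial closedness is the standard one and matches the spirit of the original source, so there is nothing to add.
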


\begin{theorem}(\cite{10}, Theorem 4.11){\label{l3}}
 Let $R$ be a HCF-ring and $0\neq D\in \lnd_R(R[X,Y])$. Then there
 exists $P\in R[X, Y]$ such that ker $D=R[P].$
\end{theorem}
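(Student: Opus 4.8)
The plan is to recognize ker $D$ as a factorially closed subring of $R^{[2]}$ having transcendence degree one over $R$, and then to quote Proposition \ref{l2}. Throughout write $B=R[X,Y]=R^{[2]}$ and $A=\ker D$, so that $R\subseteq A\subseteq B$.

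The first step is to verify that $A$ is factorially closed in $B$. Introduce the function $\deg_D(x)=\max\{m\geq 0:D^m(x)\neq 0\}$, with $\deg_D(0)=-\infty$; it is everywhere finite because $D$ is locally nilpotent. If $\deg_D(x)=m$ and $\deg_D(y)=n$ with $x,y\neq 0$, then in the binomial expansion of $D^{m+n}(xy)$ the only surviving term is $\binom{m+n}{m}D^m(x)D^n(y)$, which is nonzero since $B$ is a $\BQ$-domain, while $D^{m+n+1}(xy)=0$; hence $\deg_D(xy)=\deg_D(x)+\deg_D(y)$. Consequently, if $xy\in A$ with $x,y\neq 0$ then $\deg_D(x)+\deg_D(y)=0$, forcing $\deg_D(x)=\deg_D(y)=0$, i.e. $x,y\in A$. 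Thus $A$ is factorially closed in $B$.

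The second step is the transcendence-degree count, for which I would use a local slice. Since $D\neq 0$, choose $a\in B$ with $Da\neq 0$ and put $s=D^{m-1}(a)$ where $m=\deg_D(a)$; then $Ds\neq 0$ but $D^2s=0$, so $t:=Ds$ is a nonzero element of $A$. Extend $D$ to the localization $B_t$; since $Dt=0$, the element $\sigma=s/t$ satisfies $D\sigma=1$, so $\sigma$ is a slice. By the slice theorem $B_t=(B_t)^D[\sigma]=A_t[\sigma]$ is a polynomial ring in one variable over $A_t=(\ker D)_t$, whence comparing transcendence degrees over $R$ gives $2=\mbox{tr.deg}_R A+1$, i.e. $\mbox{tr.deg}_R A=1$. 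In particular $A\neq R$, so $R\subsetneq A\subsetneq B$.

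With these two facts in hand the theorem follows immediately: $R$ is an HCF-ring, $A$ is a factorially closed subring of $R^{[2]}$ with $R\subset A$ of transcendence degree one over $R$, so Proposition \ref{l2} yields $A=R^{[1]}$; since $A$ contains $R$ and lies in $B$, a transcendental generator gives $A=R[P]$ with $P\in R[X,Y]$, which is the claim. The main thing to get right is the transcendence-degree step: one must perform the slice construction, observe that localizing at $t\in\ker D$ neither changes transcendence degrees nor disturbs the identity $(B_t)^D=(\ker D)_t$, and keep in mind that factorial closedness is verified in $B$ itself rather than in the localization. (Alternatively, $\mbox{tr.deg}_R A=1$ can be read off after passing to $K=\mbox{frac}(R)$, since $\ker D=B\cap\ker D_K$ and any denominator-cleared transcendental element of $\ker D_K$ lands in $\ker D$.)
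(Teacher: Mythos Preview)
The paper does not supply its own proof of this statement; it is quoted as Theorem~4.11 of \cite{10} and used as a black box. Your argument is correct and is in fact the standard route to this result: the kernel of a nonzero locally nilpotent derivation on a $\BQ$-domain is factorially closed (via the $\deg_D$ function and Leibniz, exactly as you wrote), it has transcendence degree one over $R$ (via the local-slice construction, with the identification $(B_t)^D=A_t$ that you correctly justified), and then Proposition~\ref{l2} with $n=2$ yields $\ker D=R^{[1]}=R[P]$.
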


\begin{theorem}{\label{BD5}}
(Bhatwadekar and Dutta \cite{50})
Let $A$ be a ring and $B=A^{[2]}$. Then $b\in B$ is a variable of $B$
over $A$ if and only if for every prime ideal $\p$ of $A$, $\ol b \in
\ol B:=B_{\p}/\p B_{\p}$ is a variable of $\ol B$ over $A_{\p}/\p
A_{\p}$.
\end{theorem}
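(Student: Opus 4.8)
The plan is to treat the two implications separately. The forward (``only if'') direction is immediate: if $b$ is a variable of $B$ over $A$, pick $c\in B$ with $A[b,c]=B$; localizing at $\p$ and reducing modulo $\p B_\p$ gives $\ol B=\kappa(\p)[\ol b,\ol c]$ where $\kappa(\p)=A_\p/\p A_\p$, so $\ol b$ is visibly a variable of $\ol B$ over $\kappa(\p)$. All the content lies in the converse, which I would organize around two independent goals: that $C:=A[b]$ is a polynomial algebra $A^{[1]}$, and that $B$ is a polynomial algebra $C^{[1]}$ in one variable over $C$. Together these assert exactly that $b$ completes to a coordinate system, i.e. that $b$ is a variable of $B$ over $A$.

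For the first goal I would note that the hypothesis forces $\ol b$ to be transcendental over $\kappa(\p)$ for every $\p$, whence $b$ is transcendental over $A$ and $C=A[b]$ has transcendence degree one over $A$. The essential extra input is that $C$ is factorially closed in $B$; here I would use Lemma \ref{l10}, combined with the residual transcendence of $\ol b$ at every prime, to exclude any nontrivial factorization inside $B=A^{[2]}$ of an element of $C$. When $A$ is an HCF-domain --- which is precisely the DVR setting of the source of this result, and which one tries to reach by a localization and Noetherian-approximation reduction --- Proposition \ref{l2}, applied with base ring $A$ and $n=2$, then yields $C=A^{[1]}$; in the remaining cases the same conclusion can instead be extracted from the fibration analysis below (alternatively, Theorem \ref{l3} gives a route to realizing $C$ as the kernel of a suitable locally nilpotent derivation over an HCF base).

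For the second goal I would exhibit $\Spec B\to\Spec C$ as an $\BA^1$-fibration: $B$ is a finitely presented flat $C$-algebra, and the residual hypothesis, now read over the primes of $C$ via $C=A^{[1]}$, says each fiber is $\BA^1$ over the corresponding residue field. By the structure theory of $\BA^1$-fibrations over a normal Noetherian base (Bass--Connell--Wright and Dutta), $B\cong\mathrm{Sym}_C(I)$ for an invertible ideal $I$ of $C$. It then remains to show $I$ is principal, since $\mathrm{Sym}_C(I)\cong C^{[1]}$ as a $C$-algebra exactly when $I$ is free of rank one. This is where the global hypothesis is spent: reducing to $A$ local and normal makes $\mathrm{Pic}(C)=\mathrm{Pic}(A^{[1]})=\mathrm{Pic}(A)=0$, so $I$ is principal, and one patches the resulting local complementary elements into a single global $c$ with $B=C[c]$.

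The main obstacle is this second goal --- specifically the $\BA^1$-fibration structure theorem and the triviality of the invertible ideal $I$. The fibration theorem is the deepest external ingredient, as it is what upgrades ``residually $\BA^1$'' to ``locally a line bundle''; establishing that $I$ is trivial then requires both reducing the base $A$ to a $\mathrm{Pic}$-trivial (local normal) ring and a patching argument to reassemble one global variable $c$ from local data. A secondary but genuine technical point is the passage from an arbitrary $\BQ$-domain $A$ to the Noetherian normal case: I would write $A$ as a filtered union of finitely generated subrings and verify that both the variable condition and the residual hypotheses descend to and ascend from these approximations, taking care that the normality and $\mathrm{Pic}$-triviality used above are not lost in the limit.
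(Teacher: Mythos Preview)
The paper does not prove this statement: Theorem~\ref{BD5} is quoted in the Preliminaries section as a result of Bhatwadekar and Dutta \cite{50}, with no proof given, and is then used as a black box in the proof of Proposition~\ref{l16}. There is therefore no ``paper's own proof'' to compare your proposal against.

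That said, your outline is broadly in the spirit of the original Bhatwadekar--Dutta argument: the substantive direction does go via showing $B$ is an $\BA^1$-fibration over $A[b]$ and invoking the structure theory of such fibrations. A few of your intermediate steps are shakier than you suggest. Your appeal to Lemma~\ref{l10} to establish factorial closure of $C=A[b]$ in $B$ is not quite right as stated: that lemma concerns two presentations $A^{[n]}=R=B^{[n]}$ of the \emph{same} ring, not an inclusion $C\subset B$, so it does not directly give factorial closure. Also, the reduction of an arbitrary $\BQ$-domain to a Noetherian normal local ring, and the claim that the residual-variable hypothesis ``descends and ascends'' along a filtered colimit, is precisely the delicate part of the Bhatwadekar--Dutta paper and cannot be waved through; your sketch acknowledges this but does not really address it. If you want to reconstruct the proof, the cleanest route is to follow \cite{50} directly rather than to improvise with the tools quoted in this note.
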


%%%%%%%%%%%%%%%%%%%%%%%%%%%%%%%%%%%%%%%%%%%%55

\section{Rigidity} 

\begin{theorem}\label{Daigle3}
Let $A$ be a ring, $B=A^{[n]}$, $K=frac(A)$ and $D\in
\lnd_A(B)$. Assume that rank $D=$ rank $D_K$, where $D_K$ is the
extension of $D$ to $K^{[n]}$. If $D_K$ is rigid, then $D$ is rigid.
\end{theorem}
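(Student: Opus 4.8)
The plan is to pass to the fraction field $K$, use the assumed rigidity of $D_K$ there, and then descend the resulting equality of polynomial subrings back to $A$ by means of Lemma~\ref{l10}. Set $r=\text{rank } D=\text{rank } D_K$, $m=n-r$, and $S=A\setminus\{0\}$, so that $K^{[n]}=S^{-1}B$ and $D_K=S^{-1}D$. Fix two coordinate systems $(X_1,\ldots,X_n)$ and $(X'_1,\ldots,X'_n)$ in $\Gamma_D(B)$ and write $R=A[X_1,\ldots,X_m]$, $R'=A[X'_1,\ldots,X'_m]$; rigidity of $D$ is exactly the claim $R=R'$, and by symmetry it suffices to prove $R'\subseteq R$.

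First I would verify that both coordinate systems survive base change and lie in $\Gamma_{D_K}(K^{[n]})$. Localizing $A[X_1,\ldots,X_n]=B$ gives $K[X_1,\ldots,X_n]=K^{[n]}$, so each tuple is a coordinate system of $K^{[n]}$ over $K$; and $DX_j=0$ for $j\le m$ forces $D_KX_j=0$, whence $K[X_1,\ldots,X_m]\subseteq \ker D_K$. Here the hypothesis $\text{rank } D_K=r$ is exactly what is needed: it makes $m=n-\text{rank } D_K$ the correct index, so that $(X_1,\ldots,X_n)$, and likewise the primed tuple, genuinely belong to $\Gamma_{D_K}(K^{[n]})$. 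The assumed rigidity of $D_K$ then yields $K[X_1,\ldots,X_m]=K[X'_1,\ldots,X'_m]$, that is, $S^{-1}R=S^{-1}R'$ inside $K^{[n]}$.

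It remains to descend this equality from $K$ to $A$, which is the crux. Fix $i\le m$. From $X'_i\in S^{-1}R$ there is a nonzero $s\in A$ with $sX'_i\in R$. Now $B=A[X_1,\ldots,X_n]=R[X_{m+1},\ldots,X_n]=R^{[r]}$, and the same computation for the primed system gives $B={R'}^{[r]}$, so $B$ is simultaneously a polynomial ring in $r$ variables over $R$ and over $R'$. Since $sX'_i$ is a nonzero element of $R$ lying in $X'_iB$, we have $X'_iB\cap R\neq 0$ while $X'_i\in R'$; applying Lemma~\ref{l10} with ambient ring $B$ and coefficient rings $R$ and $R'$ gives $X'_i\in R$. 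As $i\le m$ was arbitrary, $R'\subseteq R$, and the symmetric argument yields $R\subseteq R'$, so $R=R'$ and $D$ is rigid.

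The one genuine obstacle is this final descent, since equality of the extended subrings over $K$ only places each $X'_i$ in $R$ after clearing an $A$-denominator. The device that removes the denominator is the inertness of $R$ in $B=R^{[r]}$ recorded by Lemma~\ref{l10} (equivalently, $R$ is factorially closed in $B$, being the degree-zero part of the grading of $B$ by the complementary variables $X_{m+1},\ldots,X_n$); this is precisely what upgrades $sX'_i\in R$ to $X'_i\in R$. The remaining points to watch are purely organizational: that localization turns coordinate systems of $B$ over $A$ into coordinate systems of $K^{[n]}$ over $K$, and that the equality $\text{rank } D=\text{rank } D_K$ is what aligns the kernel-parts over $A$ and over $K$ at the common index $m$.
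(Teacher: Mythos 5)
Your proof is correct and follows essentially the same route as the paper's: localize at $S=A\setminus\{0\}$, invoke rigidity of $D_K$ to get equality of the kernel-part subrings over $K$, then descend by clearing a denominator and applying Lemma~\ref{l10} to $R^{[r]}=B={R'}^{[r]}$. The only (harmless) difference is that you apply the lemma just to the generators $X'_i$, whereas the paper applies it to an arbitrary element $f$ of one of the subrings.
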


\begin{proof}
Assume rank $D=$ rank
$D_K=r$ and $D_K$ is rigid. We need to show that $D$ is rigid, i.e.
if $(x_1,\ldots,x_n)$ and $(y_1,\ldots,y_n)$ are two coordinate
systems of $B $ satisfying $A[x_1,\ldots, x_{n-r}]\subset$ ker $D$ and
$ A[y_1,\ldots, y_{n-r}]\subset$ ker $(D)$, then we have to show that
$A[x_1,\ldots, x_{n-r}]=A[y_1,\ldots, y_{n-r}].$ By symmetry, it is
enough to show that $A[x_1,\ldots, x_{n-r}]\subset A[y_1,\ldots,
y_{n-r}]$.

Since $D_K$ is rigid and rank $D_K=r$, we get $K[x_1,\ldots,
x_{n-r}]=K[y_1,\ldots, y_{n-r}]$.  If $f\in A[x_1,\ldots, x_{n-r}]$,
then $f\in K[y_1,\ldots, y_{n-r}]$. We can choose $a\in A$ such that
$af\in A[y_1,\ldots, y_{n-r}]$ and hence $fB\cap A[y_1,\ldots,
y_{n-r}]\neq 0$. Applying (\ref{l10}) to $A[x_1,\ldots,
x_{n-r}]^{[r]}=B=A[y_1,\ldots, y_{n-r}]^{[r]}$, we get $f\in
A[y_1, y_2,\ldots, y_{n-r}].$ Therefore $A[x_1,\ldots, x_{n-r}]\subset
A[y_1,\ldots, y_{n-r}]$. This completes the proof. $\hfill \gj$
\end{proof}

The following result is immediate from (\ref{Daigle3}) and (\ref{daigle}).

\begin{corollary}{\label{l12}}
Let A be a ring, $B=A^{[3]},$ $D\in \mbox{LND}_A(B).$ If rank $D$=
rank $D_K$, then $D$ is rigid.
\end{corollary}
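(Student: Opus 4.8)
The plan is to reduce the statement to Daigle's theorem over the fraction field and then transport the resulting rigidity back down to $A$ by means of Theorem~\ref{Daigle3}. The only hypothesis needed to invoke Theorem~\ref{Daigle3} that is not already part of the assumptions is the rigidity of $D_K$, so the entire corollary rests on establishing that $D_K$ is rigid as a derivation of $K^{[3]}$. Since $K=\mbox{frac}(A)$ is a field, this is exactly the situation governed by Daigle's result.

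First I would verify that $D_K$ is genuinely a locally nilpotent $K$-derivation of $K^{[3]}$, so that Theorem~\ref{daigle} is applicable. Because $K=\mbox{frac}(A)$, the ring $B_K=B\otimes_A K$ is the localization of $B=A^{[3]}$ at the nonzero elements of $A$, and it is $K$-isomorphic to $K^{[3]}$. Given any $g\in B_K$, I can write $g=b/a$ with $b\in B$ and $0\neq a\in A$; since $D_K$ is $K$-linear and restricts to $D$ on $B$, one checks by induction that $D_K^s(g)=a^{-1}D^s(b)$ for every $s$, and this vanishes for $s$ large by local nilpotence of $D$ on $B$. Hence $D_K\in\lnd_K(K^{[3]})$.

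Now that $D_K$ is seen to be a locally nilpotent derivation of the polynomial ring $K^{[3]}$ over the field $K$, Theorem~\ref{daigle} applies directly and gives that $D_K$ is rigid. Finally, since the hypothesis supplies rank $D=$ rank $D_K$ and we have just shown $D_K$ is rigid, Theorem~\ref{Daigle3} yields that $D$ is rigid, which is what we wanted. I do not expect a genuine obstacle here: the only nontrivial input is Daigle's theorem, already assumed, and the extension mechanism of Theorem~\ref{Daigle3}; the remaining work is the routine base-change bookkeeping confirming that $D_K$ lands in $\lnd_K(K^{[3]})$.
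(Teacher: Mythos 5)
Your proposal is correct and follows exactly the paper's route: the paper dismisses this corollary as ``immediate from (\ref{Daigle3}) and (\ref{daigle})'', which is precisely your chain of applying Daigle's theorem over $K$ to get rigidity of $D_K$ and then Theorem~\ref{Daigle3} to descend to $D$. The extra verification that $D_K\in\lnd_K(K^{[3]})$ is a sound piece of bookkeeping that the paper leaves implicit.
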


\begin{remark}
$(1)$ If $D\in \lnd_A(B)$, then rank $D$ and rank $D_K$ need not be
same.  For an example, consider $A=\mathbb{Q}[X]$ and $B=A[T,Y,Z].$
Define $D\in \lnd_A(B)$ as $DT=0$, $D(Y)=X$ and $D(Z)=Y$. Then rank
$D=2$ and rank $D_K=1.$ Further, $(T'=T-Y^2+2XZ,Y,Z)\in \Gamma_D(B)$
and $A[T]\not= A[T']$. Therefore, $D$ is not rigid, whereas $D_K$ is
rigid, by (\ref{daigle}). 

Above example gives a $D\in \lnd(k^{[4]})$ which is not rigid. Hence
Daigle's result (\ref{daigle}) is best possible. Note that $D$ is a
triangular derivation and by \cite{BD2010}, $ker(D)$ is a finitely
generated $k$-algebra.

$(2)$ The condition in (\ref{Daigle3}) is sufficient but not
necessary, i.e. $D\in \lnd_A(B)$ may be rigid even if rank $D\not=$
rank $D_K$. For an example consider $A=\mathbb{Q}[X]$ and $B=A[Y,Z].$
Define $D\in \lnd_A(B)$ as $D(Y)=X$ and $D(Z)=Y$. Then rank $D=2$ and
hence $D$ is rigid. Further, rank $D_K=1$ and $D_K$ is also rigid, by
(\ref{daigle}).

$(3)$ It will be interesting to know if $D\in \lnd(k^{[n]})$ being
rigid implies that $ker(D)$ is a finitely generated $k$-algebra. The
following example could provide an answer.

Let $D=X^{3} \partial_S+S\partial_T+T\partial_U+X^2\partial_V\in
\lnd(B)$, where $B=k^{[5]}=k[X,S,T,U,V]$. Daigle and Freudenberg
\cite{DF99} have shown that $ker(D)$ is not a finitely generated
$k$-algebra. We do not know if $D$ is rigid.  We will show that rank $D=3$.

Clearly $X,S-XV\in ker(D)$ is part of a coordinate system. Hence rank
$D\leq 3$.
If rank $D=1$, then there exists a coordinate system
$(X_1,\ldots,X_4,Y)$ of $B$ over $k$ such that $X_1,\ldots,X_4\in
ker(D)$. Hence $D=f\partial_Y$ for some $f\in k[X_1,\ldots,X_4]$ and
$ker(D) =k[X_1,\ldots,X_4]$ is a finitely generated $k$-algebra, a
contradiction.
If rank $D=2$, then there exists a coordinate system
$(X_1,X_2,X_3,Y,Z)$ of $B$ over $k$ such that $X_1,X_2,X_3\in
ker(D)$. If we write $A=k[X_1,X_2,X_3]$, then $D\in
\lnd_A(A[Y,Z])$. Since $A$ is UFD, by (\cite{10}, Theorem 4.11),
$ker(D)=A^{[1]}$, hence $ker(D)$ is a finitely generated $k$-algebra, a
contradiction. Therefore, rank of $D$ is $3$.

\end{remark}

%%%%%%%%%%%%%%%%%%%%%%%%%%%%%%%%%%%%%%%%%%%%%%%%%%%%%%%%%%%%

\section{Triangulability}

We begin with the following result which is of independent interest.

\begin{lemma}{\label{l13}}
 Let $A$ be a UFD, $K=\mbox{frac}(A)$, $B=A^{[n]}$ and $D\in
 \lnd_A(B)$. Let $D_K$ be the extension of $D$ on $K^{[n]}.$ If $D$ is
 irreducible, then $D_K$ is irreducible.
\end{lemma}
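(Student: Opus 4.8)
The plan is to use the characterization of irreducibility via a greatest common divisor, together with Gauss's lemma over the UFD $A$. Recall that a derivation is \emph{irreducible} when the only elements of the ring dividing its entire image are units. Since $D$ is an $A$-derivation of $B=A[X_1,\ldots,X_n]$, for any $b\in B$ we have $D(b)=\sum_i (\partial b/\partial X_i)\,D(X_i)$, so the image ideal $(D(B))$ coincides with $(D(X_1),\ldots,D(X_n))$; hence $D$ is irreducible precisely when $\gcd(D(X_1),\ldots,D(X_n))$ is a unit of $B$, and likewise $D_K$ is irreducible precisely when this same gcd is a unit of $K^{[n]}$ (note $D_K(X_i)=D(X_i)$). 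I would argue by contrapositive: assuming $D_K$ is not irreducible, I will produce a nonunit common divisor of the $D(X_i)$ inside $B$, contradicting irreducibility of $D$.

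Suppose then that $D_K$ is not irreducible (we may assume $D\neq 0$). There is a nonunit $g\in K^{[n]}$ dividing every $D(X_i)$ in $K^{[n]}$; since the units of $K^{[n]}$ are exactly $K^{*}$, such a $g$ has positive degree in the $X_i$, and in particular is nonzero. Because $A$ is a UFD, $B=A^{[n]}$ is a UFD carrying a content map $c(\cdot)$ valued in $A$ (well defined up to units), and Gauss's lemma gives $c(fh)=c(f)\,c(h)$. Clearing denominators and extracting content, I may write $g=\lambda\,\tilde g$ with $\lambda\in K^{*}$ and $\tilde g\in B$ primitive; replacing $g$ by $\tilde g$, which differs from $g$ only by the unit $\lambda$ of $K^{[n]}$, I may assume the common divisor is a primitive polynomial $\tilde g\in B$ of positive degree, hence a nonunit of $B$.

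The crux is to upgrade divisibility in $K^{[n]}$ to divisibility in $B$. Fix $i$ and write $D(X_i)=\tilde g\,q_i$ with $q_i\in K^{[n]}$. Using Gauss's lemma again, write $q_i=(e_i/m_i)\,\tilde q_i$ with $\tilde q_i\in B$ primitive and $e_i,m_i\in A$ coprime, so that $m_i\,D(X_i)=e_i\,\tilde g\,\tilde q_i$. The product $\tilde g\,\tilde q_i$ is primitive, so comparing contents yields $m_i\,c(D(X_i))=e_i\cdot(\mbox{unit})$ in $A$; thus $m_i$ divides $e_i$, and coprimality forces $m_i$ to be a unit. Hence $q_i\in B$ and $\tilde g$ divides $D(X_i)$ in $B$. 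As this holds for every $i$, the nonunit $\tilde g$ is a common divisor of $D(X_1),\ldots,D(X_n)$ in $B$, so $\gcd(D(X_1),\ldots,D(X_n))$ is a nonunit of $B$, contradicting the irreducibility of $D$.

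The only delicate point is the content descent of the previous paragraph; everything else is formal. I expect the main obstacle to be the bookkeeping with contents — ensuring the content map is well defined up to units and that multiplicativity applies to polynomials in several variables over the UFD $A$ — but no idea beyond Gauss's lemma is needed. Note that local nilpotence of $D$ plays no role in the argument.
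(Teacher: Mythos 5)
Your proof is correct and takes essentially the same route as the paper's: both argue the contrapositive and use the UFD structure of $B=A^{[n]}$ (Gauss-type denominator clearing) to descend a nonunit common divisor of the $D(X_i)$ from $K^{[n]}$ to $B$. The paper normalizes by writing $Dx_i = fg_i/c_i$ with $\gcd_B(g_i,c_i)=1$ and dividing the witness $f$ by the lcm of the $c_i$, whereas you normalize the divisor to its primitive part and compare contents via Gauss's lemma --- the same idea with slightly different bookkeeping.
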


\begin{proof}
 We prove that if $D_K$ is reducible, then so is $D.$ Let $
 D_K(K^{[n]}) \subset fK^{[n]}$ for some $f\in B$. If
 $B=A[x_1,\ldots,x_n]$, then we can write $Dx_i=fg_i/c_i$ for some
 $g_i\in B$ and $c_i\in A$ with gcd$_B(g_i, c_i)=1.$ Since $Dx_i\in
 B$, we get $c_i$ divides $f$ in $B$. If $c$ is 
 lcm of $c_i$'s, then $c$ divides $ f$. If we take $f'=f/c\in B$, then $Dx_i\in
 f'B$ and hence $D$ is reducible. $\hfill \gj$
\end{proof}

\begin{proposition}{\label{l16}}
Let $A$ be a ring, $B=A^{[3]}$, and $D\in \lnd_A(B)$ be
of rank one. Let $(X, Y, Z)\in \Gamma(B)$ be such that
$DX=0$. Assume that either $A$ is a UFD or $D$ is irreducible. Then
$D$ is triangulable over $A[X]$.
\end{proposition}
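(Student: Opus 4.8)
The plan is to produce, over the ring $R=A[X]$, a coordinate system of $B$ one of whose members lies in $\ker D$; triangulability then drops out automatically. Since $D$ has rank one over $A$, the description of rank-one derivations recalled in the introduction gives a coordinate system $(X_1,X_2,X_3)$ of $B$ over $A$ with $\ker D=A[X_1,X_2]$, $B=\ker D[X_3]=(\ker D)^{[1]}$, and $D=f\partial_{X_3}$ for some $f\in\ker D$. I claim it suffices to show $\ker D=R[P]$ for some $P$ transcendental over $R$. Indeed, then $B=\ker D[X_3]=R[P][X_3]=R[P,X_3]=R^{[2]}$, so $(P,X_3)$ is a coordinate system of $B$ over $R$; and since $DP=0\in R$ while $DX_3=f\in\ker D=R[P]$, this exhibits $D$ as triangulable over $R=A[X]$. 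So the whole problem reduces to producing the $R$-generator $P$ of $\ker D$.

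When $A$ is a UFD, $R=A[X]$ is again a UFD, hence an HCF-ring, and applying Theorem \ref{l3} to $0\neq D\in\lnd_R(R[Y,Z])$ yields $\ker D=R[P]$ at once; this finishes that case.

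When instead $D$ is irreducible, the idea is to show directly that $X$ is a variable of $S:=\ker D=A[X_1,X_2]=A^{[2]}$ over $A$, for then $S=A[X][P]=R[P]$ with $P$ transcendental over $R$, and we conclude as above. First I would note that, $D$ being irreducible and equal to $f\partial_{X_3}$ with image ideal $fB$, the element $f$ must be a unit of $A$. To prove $X$ is a variable of $S$ I would invoke the criterion of Bhatwadekar and Dutta (Theorem \ref{BD5}) and check the condition over every residue field $\kappa=\kappa(\p)$, which has characteristic zero because $\BQ\subset A$. Over such a $\kappa$, reduction of the coordinate system $(X,Y,Z)$ shows $\bar X$ is a variable of $\bar B=\kappa^{[3]}$, while reduction of $D$ gives $\bar D=\bar f\,\partial_{\bar X_3}$ with $\bar f\in\kappa^{*}$ and $\ker\bar D=\bar S=\kappa[\bar X_1,\bar X_2]$. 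Since $\bar X\in\ker\bar D$, the derivation $\bar D$ descends to a nonzero locally nilpotent derivation $\tilde D=\bar f\,\partial_{\bar X_3}$ of $\bar B/\bar X\bar B=\kappa^{[2]}$ (here $\bar X$ being a variable of $\bar B$ is what makes the quotient a polynomial ring), and one computes $\ker\tilde D=\bar S/\bar X\bar S$. By Rentschler's theorem (equivalently Theorem \ref{l3} with base field $\kappa$) the kernel of a nonzero locally nilpotent derivation of $\kappa^{[2]}$ is $\kappa^{[1]}$, so $\kappa[\bar X_1,\bar X_2]/(\bar X)\cong\kappa^{[1]}$. As $\kappa$ has characteristic zero, the Abhyankar--Moh--Suzuki epimorphism theorem then forces $\bar X$ to be a variable of $\bar S=\kappa^{[2]}$. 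Running this over all $\p$ and feeding it back into Theorem \ref{BD5} shows that $X$ is a variable of $S$ over $A$, completing the irreducible case.

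The routine parts are the verification of triangulability from the coordinate system $(P,X_3)$ and the behaviour of coordinate systems under the base changes $A\to\kappa(\p)$ and $B\to B/XB$. The main obstacle is the irreducible case: getting hold of the $R$-generator of $\ker D$ when $A$ is not known to be an HCF-ring, for which Theorem \ref{l3} is unavailable. The device is to pass to residue fields via Theorem \ref{BD5} and there combine the structure of the induced derivation $\tilde D$ with the epimorphism theorem; the points needing care are that $\tilde D$ is nonzero (which is exactly where irreducibility, through $f$ being a unit, is used) and that the residue fields have characteristic zero so that Abhyankar--Moh--Suzuki applies.
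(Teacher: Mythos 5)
Your proof is correct, and its skeleton matches the paper's: use the rank-one structure to get $\ker D=A[X_1,X_2]$, $B=(\ker D)[X_3]$, $D=f\partial_{X_3}$, then reduce everything to showing that $X$ is a variable of $\ker D$ over $A$, handling the UFD case directly over $A[X]$ and the irreducible case through the Bhatwadekar--Dutta residual-variable criterion (\ref{BD5}). The differences lie in which key results discharge the two cases. In the UFD case the paper applies the Abhyankar--Eakin--Heinzer proposition (\ref{l2}) to the factorially closed subring $A[X]\subset\ker D\subset A[X]^{[2]}$, while you invoke the kernel theorem (\ref{l3}) over the HCF-ring $A[X]$; these are interchangeable here, and your variant has the small advantage of not needing the fact (used but not stated in the paper) that kernels of locally nilpotent derivations are factorially closed. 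The genuine divergence is the residue-field step in the irreducible case: after using irreducibility to make $f$ a unit (exactly as the paper makes $DZ'$ a unit), the paper simply applies (\ref{l2}) once more, now over $\kappa(\p)[X]$, to identify $\ker \ol D$ with $\kappa(\p)[X]^{[1]}$ inside the three-variable ring; you instead descend to the two-variable quotient $\ol B/\ol X \ol B\cong\kappa(\p)^{[2]}$, compute the kernel of the induced derivation as $\ol S/\ol X \ol S$, identify it as $\kappa(\p)^{[1]}$ by Rentschler's theorem, and then conclude via the Abhyankar--Moh--Suzuki epimorphism theorem that $\ol X$ is a variable of $\ol S$. Your route is sound---the points you flag (nonvanishing of the induced derivation, $\ol X \ol B\cap \ol S=\ol X \ol S$, characteristic zero so that AMS applies) all check out---but it purchases the conclusion with a substantially deeper external theorem; the paper's choice keeps both cases flowing through the single Proposition \ref{l2} and stays entirely within its stated preliminaries, which is the more economical and self-contained option.
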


\begin{proof}
 As rank $D=1$, there exists $(X',Y', Z')\in \Gamma(B)$ such that
 $DX'=DY'=0$. By (\ref{l1}), ker $D=A[X', Y']$ and $DZ'\in$ ker $D$.

$(i)$ Assume $A$ is a UFD. Since $A[X]\subset A[X', Y']\subset
A[X]^{[2]}$ and $A[X', Y']$ is factorially closed in $A[X]^{[2]}$; by
(\ref{l2}), $A[X', Y']=A[X][P]$ for some $P\in B.$ Hence $B=A[X, P,
Z']$ and $DZ'\in A[X, P]$. Thus $D$ is triangulable over $A[X].$

$(ii)$ Assume $D$ is irreducible.  Then $DZ'$ must be a unit.  To show
that $X$ is a variable of $A[X',Y']$ over $A$.  By (\ref{BD5}), it is
enough to prove that for every prime ideal $\p$ of $A$, if
$\kappa(\p)=A_{\p}/\p A_{\p}$ then $\ol X$ is a variable of $\kappa(\p)[X',Y']$
over $\kappa(\p)$. Extend $D$ on $A_{\p}[X, Y, Z]$ and let $\ol D$ be
$D$ modulo $\p A_{\p}$.  Then $\mbox{ker }\ol D=\kappa(\p)[X', Y']$.
By (\ref{l2}), $\mbox{ker }\ol D=\kappa(\p)[X]^{[1]}$. Therefore $X$ is
a variable of $A[X', Y']$, i.e.
$A[X', Y']=A[X, P]$ for some $P\in B.$ Hence $B=A[X, P, Z']$. Thus $D$
is triangulable over $A[X].$ $\hfill \gj$
\end{proof}

\begin{proposition}{\label{l17}}
 Let $A$ be a ring, $K=\mbox{frac}(A)$, $B=A^{[3]}$ and
 $D\in \lnd_A(B)$.  Let $(X, Y, Z)\in \Gamma(B)$ be such that
 $DX=0$. Assume rank $D$=rank $D_K=2$. Then $D$ is triangulable over
 $A$ if and only if $D$ is triangulable over $A[X]$.
 \end{proposition}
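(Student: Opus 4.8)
The plan is to prove the two implications separately, spending the rank hypothesis only on the reverse one. The implication ``triangulable over $A[X]$ $\Rightarrow$ triangulable over $A$'' I would dispatch formally: given a coordinate system $(W_1,W_2)$ of $B$ over $A[X]$ with $DW_1\in A[X]$ and $DW_2\in A[X][W_1]$, the equality $B=A[X][W_1,W_2]=A[X,W_1,W_2]$ shows $(X,W_1,W_2)\in\Gamma(B)$, and since $DX=0$ the chain $DX=0$, $DW_1\in A[X]$, $DW_2\in A[X,W_1]$ makes $D$ triangular over $A$. No rank hypothesis enters here.

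For the reverse implication I would start from a coordinate system $(Y_1,Y_2,Y_3)\in\Gamma(B)$ triangulating $D$ over $A$, so $DY_1=a\in A$, $DY_2\in A[Y_1]$ and $DY_3\in A[Y_1,Y_2]$. The crux is to upgrade $DY_1\in A$ to $DY_1=0$, and this is exactly where rank $D_K=2$ is used. If $a\neq0$ then $a$ is a unit in $K$, so $Y_1/a$ is a slice of $D_K$ on $K^{[3]}$: writing $DY_2=p(Y_1)$ and choosing an antiderivative $P$ of $p$ (available since we are in characteristic zero), the element $\tilde Y_2=Y_2-a^{-1}P(Y_1)$ lies in $\ker D_K$, and an analogous correction of $Y_3$ produces $\tilde Y_3\in\ker D_K$ with $(Y_1,\tilde Y_2,\tilde Y_3)\in\Gamma(K^{[3]})$ and $K[\tilde Y_2,\tilde Y_3]\subset\ker D_K$. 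That would force rank $D_K\leq1$, contradicting rank $D_K=2$; hence $a=0$ and $(Y_1,Y_2,Y_3)\in\Gamma_D(B)$.

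At this point both $(X,Y,Z)$ and $(Y_1,Y_2,Y_3)$ lie in $\Gamma_D(B)$, the former because $DX=0$. Since rank $D=$ rank $D_K$, Corollary \ref{l12} tells me $D$ is rigid, and because $n-r=3-2=1$ this rigidity gives the single equality $A[X]=A[Y_1]$. Then $B=A[Y_1,Y_2,Y_3]=A[X][Y_2,Y_3]$, so $(Y_2,Y_3)$ is a coordinate system of $B$ over $A[X]$, and $DY_2\in A[Y_1]=A[X]$ together with $DY_3\in A[Y_1,Y_2]=A[X][Y_2]$ exhibits $D$ as triangular over $A[X]$. I expect the main obstacle to be the normalization $DY_1=0$: triangulability over $A$ only yields $DY_1\in A$, and it is precisely the two uses of rank $D=$ rank $D_K=2$ --- once through the slice argument over $K$ and once through the rigidity of Corollary \ref{l12} --- that close this gap and pin $A[X]$ to the kernel variable $A[Y_1]$.
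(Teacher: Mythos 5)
Your proof is correct and takes essentially the same route as the paper's: show that the first triangulating variable must satisfy $DY_1=0$ (the paper likewise observes that $DX'=a\neq 0$ would make $X'/a$ a slice of $D_K$, forcing rank $D_K=1$), then invoke rigidity via Theorem \ref{Daigle3} (equivalently Corollary \ref{l12}) to get $A[X]=A[Y_1]$ and transfer the triangulation to $A[X]$. Your explicit antiderivative corrections simply fill in the details that the paper compresses into ``which implies that rank $D_K=1$.''
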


\begin{proof}
We need to show only $(\Rightarrow)$.
Suppose that $D$ is triangulable over $A$. Then there
  exists $(X',Y', Z')\in \Gamma(B)$ such that $DX'\in A,$ $ DY'\in
  A[X']$ and $DZ'\in A[X', Y']$. 
If $a=DX'\neq 0$, then $D_K(X'/a)=1$;
which implies that rank $D_K=1,$ a
  contradiction. Hence $DX'=0$.

Since $D_K$ is rigid, by
  (\ref{Daigle3}), $D$ is rigid of rank $2$. Therefore $A[X]=A[X']$
  and $D$ is triangulable over $A[X].$ $\hfill \gj$
\end{proof}
%%%%%%%%%%%%%%%%%%%%%%%%%%%%%%%%%%%%%%%%%%%%%%%%%%%%%%%%%%%%%%

{\bf Acknowledgements.} We sincerely thank the referee for his/her
remarks which improved the exposition.

\small
{}

\end{document}